 %&latex
\documentclass[12pt,normalheadings]{scrartcl}  % `article' with A4 paper size option
\usepackage{amsmath}
\usepackage{amsthm}                 % theorem extensions
\usepackage{amssymb}
\usepackage{graphicx}
\usepackage{psfrag}
\usepackage{caption}

%\input amssym.def
%\input amssym.tex

%EPS fig package:
%\usepackage{epsfig}
%\topmargin0cm
%\textheight21.5cm
%\textwidth=13cm
%\oddsidemargin0cm

\theoremstyle{plain}
\newtheorem{theorem}{Theorem}

\newtheorem{lemma}[theorem]{Lemma}

\newtheorem{proposition}[theorem]{Proposition}

\theoremstyle{definition}
\newtheorem{remark}[theorem]{Remark}
\newtheorem{example}[theorem]{Example}

% Useful short cuts

\newcommand{\CC}{{\mathbb{C}}}
\newcommand{\HH}{{\mathbb{H}}}

\newcommand{\QQ}{{\mathbb{Q}}}

\newcommand{\RR}{{\mathbb{R}}}
\newcommand{\ZZ}{{\mathbb{Z}}}

\newcommand{\calO}{{\cal O}}

\newcommand{\calI}{{\cal I}}

\newcommand{\calE}{{\cal E}}

\newcommand{\bE}{{\calE}}
\newcommand{\e}{{E}}
\newcommand{\be}[2]{{E_{#1}^{#2}}}
\newcommand{\E}[2]{{E_{#1}^{#2}}}
\newcommand{\F}[2]{{F_{#1}^{#2}}}
\newcommand{\rad}{\text{rad}}

\DeclareMathOperator{\Div}{\mathrm{Div}}
\DeclareMathOperator{\Pic}{\mathrm{Pic}}
\DeclareMathOperator{\Hom}{\mathrm{Hom}}

\DeclareMathOperator{\Ext}{\mathrm{Ext}}
\DeclareMathOperator{\Coh}{\mathrm{Coh}}

\newcommand{\TTT}{\mathsf{T}\!}   % special case for the \TTT_X notation
\newcommand{\MMM}{\mathsf{M}}
\newcommand{\DDD}{\mathcal{D}}
\newcommand{\chern}{\mathrm{ch}}
\newcommand{\Dsgr}{\DDD_{\mathrm{sg}}^{\mathrm{gr}}}

\newcommand{\pseriesl}{[\![}
\newcommand{\pseriesr}{]\!]}

\newcommand{\isom}{ \text{{\hspace{0.48em}\raisebox{0.8ex}{${\scriptscriptstyle\sim}$}}} \hspace{-0.65em}{\rightarrow}\hspace{0.3em}} % Gewalt, ohne Trennung

\newcommand{\pares}{Y} % partial resolution in the Fuchs case

\begin{document}

\begin{center}
{\Large\textsf{\textbf{Poincar\'e series and Coxeter functors for \\ Fuchsian singularities}}}

\vspace{3ex}

Wolfgang Ebeling and David Ploog\footnote{
Keywords: Poincar\'e series, Coxeter element, spherical twist functor, Eichler-Siegel transformation.
AMS Math.\ Subject Classification: 13D40, 18E30, 32S25.
\newline
The second author likes to thank the DFG whose grant allows him to enjoy the hospitality of the University of Toronto.
}
\end{center}

%\marginpar{\today}

\begin{quotation}
\noindent
\small
\textsc{Abstract}
We consider Fuchsian singularities of arbitrary genus and prove, in a conceptual manner, a formula for their Poincar\'e series. This uses Coxeter elements involving Eichler-Siegel transformations. We give geometrical interpretations for the lattices and isometries involved, lifting them to triangulated categories.
\end{quotation}

%%%%%%%%%%%%%%%%%%%%%%
\section*{Introduction}
A Fuchsian singularity is the affine surface singularity obtained from the cotangent bundle of the upper half plane by taking the quotient by a Fuchsian group of the first kind and collapsing the zero section. In particular, it has a good $\CC^\ast$-action. The surface can be compactified in a natural manner, leading to additional cyclic quotient singularities of type $A_\mu$ on the boundary. After resolving the singularities on the bundary, one gets a star-shaped configuration of rational $(-2)$-curves with a central curve of genus $g$ and self-intersection number $2g-2$. 

In the case $g=0$, the dual graph of this configuration determines a Coxeter element. It was shown in  \cite{EP} that the Poincar\'e series of the graded coordinate ring of the singularity is the quotient of the characteristic polynomials of two suitable extensions of this Coxeter element. 

Here we treat Fuchsian singularities of arbitrary genus $g$. If $g>0$, there is no longer a reflection defined by the homology class of the central curve. Therefore, one has to modify the definition of the Coxeter element. 
We replace the product of two reflections by an Eichler-Siegel transformation. With this change, we prove a result analogous to the one stated above, along the lines of \cite{EP}. We also give a geometrical and categorical interpretation of the Coxeter elements, thereby explaining where the Eichler-Siegel transformation comes from and why the methods applied before have to break down.

%%%%%%%%%%%%%%%%%%%%%%%%%%%%%%%%%%%%%%
\section{Eichler-Siegel transformations} \label{Euler-Siegel}
We first recall the definition of the Eichler-Siegel transformations.
Let $(V, \langle \ , \ \rangle)$ be an even integral lattice and denote by $\text{O}(V)$ the group of isometries of this lattice. Define a map
$ \Psi \colon V \otimes V  \to  \text{End}(V)$,
   %\qquad
   $\sum\nolimits_i u_i \otimes a_i \mapsto \text{id} - 
              \sum\nolimits_i \langle \cdot,a_i \rangle u_i.$% \]

Let $a \in V$ be arbitrary and $u \in V$ isotropic and orthogonal to $a$, i.e.\
 $\langle u,u\rangle=\langle a,u\rangle = 0$.
The \emph{Eichler-Siegel transformation} corresponding to $u$ and $a$ is defined as
\[\psi_{u,a} := \Psi((\tfrac{1}{2}\langle a, a \rangle u-a) \otimes u ) \
               \Psi(u \otimes a). \]
It is easily checked that $\psi_{u,a}$ is an isometry, using the formula
\begin{align*}
\psi_{u,a}(x) %&= x - \langle x,u \rangle \left( \tfrac{1}{2}\langle a,a \rangle u - a \right) 
              %     - \langle x,a \rangle u \\
              &= x + \langle x,u \rangle a -  \langle x,a \rangle u  
                   - \tfrac{1}{2}\langle a,a \rangle \langle x,u \rangle u.
\end{align*}

\begin{example}
Let $a \in V$ be a root, i.e.\ $\langle a, a \rangle =-2$. Then $\Psi(-a \otimes a)=s_a$  is the reflection corresponding to $a$, given by $s_a(v) = v + \langle v,a\rangle a$ for all $v \in V$. Furthermore, for $u\in V$ with $\langle u,u\rangle=\langle a,u\rangle = 0$, one easily sees
 $\psi_{u,a} = \Psi((-u-a) \otimes u) \ \Psi(u \otimes a) = s_a s_{a-u}$.
\end{example}

\begin{example} \label{ex:U}
Let $V_-$ be any even lattice. Denote by $U$ the unimodular hyperbolic
plane with a symplectic basis $u, w$  and the symmetric bilinear form
 $\langle u,w\rangle=1$
and $\langle u,u\rangle = \langle w,w\rangle = 0$.
Let $V_+ := V_-\oplus U$ be the orthogonal direct sum and define a group
homomorphism
 $m \colon V_- \to \text{O}(V_+)$, $a \mapsto m_a := \psi_{u,a}$.

The isometry $m_a$ is given by $m_a(u)=u$, $m_a(v)=v-\langle v,a\rangle u$ 
for any $v\in V_-$ and $m_a(w)=w+a-\tfrac{1}{2}\langle a,a\rangle u$.
This example appears in \cite[I, \S 3]{Ei}. (M.~Eichler notes that these 
automorphisms first occurred in a paper of C.~L.~Siegel.)
\end{example}

%%%%%%%%%%%%%%%%%%%%%%%
\section{The result: Poincar\'e series of Fuchsian singularities} \label{sec:result}
Let $(X,x)$ be a normal surface singularity with a good $\CC^\ast$-action. 
This means that $X=\text{Spec}(A)$ is a normal two-dimensional affine 
algebraic variety over $\CC$ which is smooth outside its \emph{vertex} $x$. 
Its coordinate ring $A$ has the structure of a graded $\CC$-algebra 
$A = \bigoplus_{k=0}^\infty A_k$, $A_0=\CC$, and $x$ is defined 
by the maximal ideal $\mathfrak{m}= \bigoplus_{k=1}^\infty A_k$. 

A natural compactification of $X$ is given by $\overline{X}:= {\rm Proj}(A[t])$, 
where $t$ has degree 1 for the grading of $A[t]$ (see \cite{Pinkham77a}). 
This is a normal projective surface with $\CC^*$-action, and $\overline{X}$ 
may acquire additional singularities on the boundary 
$\overline{X}_\infty:=\overline{X}\setminus X={\rm Proj}(A)$ which itself is
a smooth, projective curve.

A normal surface singularity $(X,x)$ with good $\CC^\ast$-action  is called 
\emph{Fuchsian} if the canonical sheaf of $\overline{X}$ is trivial. In this 
case, the singularities on the boundary are all of type $A_\mu$. The genus of 
the Fuchsian singularity is defined as the genus $g=g(\overline{X}_\infty)$ of 
the boundary.

Let $(X,x)$ be a Fuchsian singularity of genus $g$. According to \cite{Dolgachev75}
and \cite[(1.2) Proposition]{Looijenga84}, there exists a finitely generated
cocompact Fuchsian group of the first kind $\Gamma \subset PSL(2,\RR)$ 
which acts properly discontinuously on the upper half plane $\HH$ such that the
action of $\Gamma$ lifts to the cotangent bundle $T_{\HH}^{-1}$ and 
$A_k = H^0(\HH, T_\HH^{-k})^\Gamma$. The quotient $Z=\HH/\Gamma$ is a compact 
Riemann surface. Let $D_0$ be a canonical divisor of $Z$. By 
\cite[Theorem~5.1]{Pinkham77a} there exist points $p_1, \dotsc , p_r \in Z$ 
and integers $\alpha_i>1$ for $i=1, \dotsc, r$ such that
\[ A_k =L (D^{(k)}), \quad 
   D^{(k)} := kD_0 + \sum_{i=1}^r \big[ k \tfrac{\alpha_i - 1}{\alpha_i} \big] p_i
  \quad\text{ for } k\geq0 . \]
Here, $[x]$ denotes the largest integer $\leq x$, and $L(D):=H^0(Z,\calO_Z(D))$ 
for a divisor $D$ on $Z$ denotes the linear space of meromorphic functions $f$ 
on $Z$ such that $(f) \geq -D$. The genus $g$ of $Z$ coincides with the genus 
of $\overline{X}_\infty$. The degree of $D_0$ is $2g-2$.

We enumerate the points $p_i$ so that
 $\alpha_1\leq \alpha_2 \leq \dotso \leq \alpha_r$. 
The variety $\overline{X}$ has cyclic quotient singularities of type $A_{\alpha_1-1},\dotsc,A_{\alpha_r-1}$ along $\overline{X}_\infty:=\overline{X} \setminus X$. Let $\pi\colon S\to\overline{X}$ be the minimal normal crossing resolution of all singularities of $\overline{X}$. The preimage $\bE:=\widetilde{X}_\infty$ of $\overline{X}_\infty$ under $\pi$ consists of the strict transform $E$ of $\overline{X}_\infty$ and $r$ chains $\E{1}{i},\dotsc,\E{\alpha_i-1}{i}$, $i=1,\dotsc,r$, of rational curves of self-intersection number $-2$ which intersect according to the dual graph shown in Figure~\ref{Fig1}. The central curve $E$ is a curve of genus $g$ with self-intersection number $2g-2$.

\begin{figure}\centering
\psfrag{e11}{$\scriptstyle \E{1}{1}$}
\psfrag{e21}{$\scriptstyle \E{1}{2}$}
\psfrag{er1}{$\scriptstyle \E{1}{r}$}
\psfrag{er2}{$\scriptstyle \E{2}{r}$}
\psfrag{e1a1}{$\scriptstyle \E{\alpha_1-1}{1}$}
\psfrag{e2a2}{$\scriptstyle \E{\alpha_2-1}{2}$}
\psfrag{erar}{$\scriptstyle \E{\alpha_r-1}{r}$}
\psfrag{e}{$\scriptstyle E$}
\psfrag{g}{$\scriptscriptstyle 2g-2$}
%\psfrag{u}{$\scriptstyle E-u$}
%\psfrag{w}{$\scriptstyle u-w$}
\includegraphics[width=0.5\linewidth]{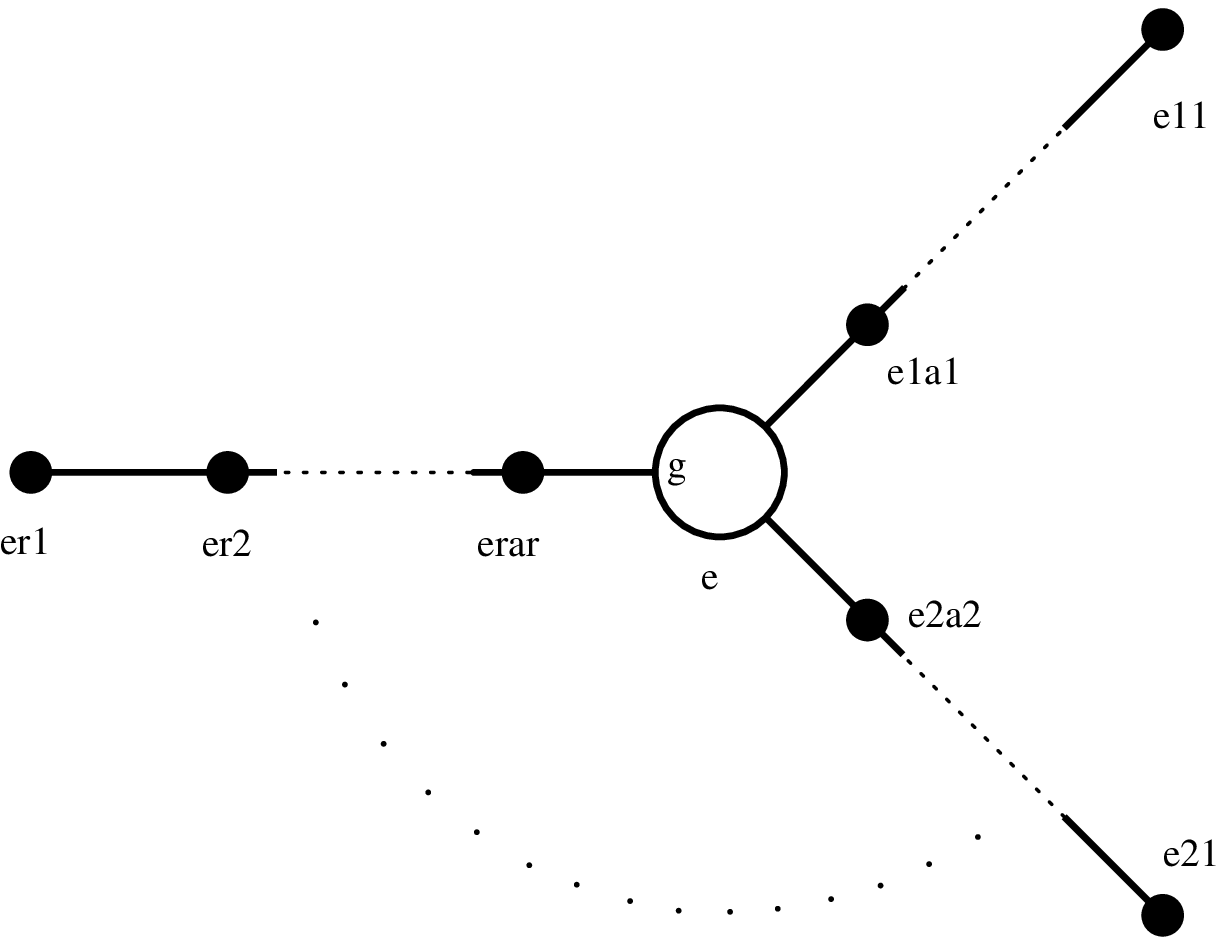}
\caption{Dual graph of $\bE=\widetilde{X}_\infty$}
\label{Fig1}
\end{figure}

We consider the \emph{Poincar\'{e} series} of the algebra $A$ 
\[ p_A(t)= \sum_{k=0}^\infty \dim(A_k) \ t^k . \]
In order to give a description for $p_A$, we need some definitions.

Let $V_-$ be the lattice generated by the irreducible components of $\widetilde{X}_\infty=\bE$ with bilinear form $\langle-,-\rangle$ given by the intersection numbers. Define two more lattices as orthogonal directs sums by $V_0 := V_- \oplus \ZZ u$ and $V_+:= V_- \oplus U$ where $\langle u,u\rangle=0$. For both $V_0$ and $V_+$, denote by $\psi_{u,E}$ the Eichler-Siegel transformation corresponding to $u$ and $E$, as introduced in Example \ref{ex:U}. Define isometries $\tau_0\in\text{O}(V_0)$ and 
$\tau_+\in\text{O}(V_+)$ by
\begin{align*}
 \tau_0 &:= s_{\E{1}{1}} \dotsm s_{\E{\alpha_1-1}{1}} \dotsm s_{\E{1}{r}} \dotsm s_{\E{\alpha_r-1}{r}} \psi_{u,E}, \\
 \tau_+ &:= s_{\E{1}{1}} \dotsm s_{\E{\alpha_1-1}{1}} \dotsm s_{\E{1}{r}} \dotsm s_{\E{\alpha_r-1}{r}} \psi_{u,E} s_{u-w}.
\end{align*}
Let $\Delta_0(t) = \det (1- \tau_0^{-1} t)$ and $\Delta_+(t) = \det (1- \tau_+^{-1} t)$ be the characteristic polynomials of $\tau_0$ and $\tau_+$ respectively, using a suitable normalization.

\begin{theorem} \label{thm:main}
For a Fuchsian singularity we have $ p_A = \dfrac{\Delta_+}{\Delta_0} $.
\end{theorem}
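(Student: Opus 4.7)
The plan is to compute each side of the claimed equality $p_A = \Delta_+/\Delta_0$ and compare. For the left-hand side, the Pinkham--Dolgachev formula $A_k = L(D^{(k)})$ combined with Riemann--Roch on the genus-$g$ curve $Z = \overline{X}_\infty$ gives $\dim A_k = \deg D^{(k)} + 1 - g$ for $k \geq 2$, while Serre duality handles $k = 0, 1$ (yielding $\dim A_0 = 1$ and $\dim A_1 = g$, since $D^{(1)} = D_0$ is a canonical divisor). Using $[k(\alpha_i-1)/\alpha_i] = k - \lceil k/\alpha_i \rceil$ together with the elementary generating-function identity $\sum_k \lceil k/\alpha \rceil\, t^k = t/[(1-t)(1-t^\alpha)]$ then expresses $p_A(t)$ as an explicit rational function in $t$, $g$, $r$, and the $\alpha_i$.

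For the right-hand side, both $\tau_0$ and $\tau_+$ admit a tractable block decomposition. Since $u$ is orthogonal to $V_-$ and fixed by $\psi_{u,E}$, the line $\ZZ u$ is $\tau_0$-invariant and $\tau_0$ descends on $V_0/\ZZ u \cong V_-$ to the pure reflection product $W := s_{\E{1}{1}} \dotsm s_{\E{\alpha_r - 1}{r}}$. Hence
\[ \Delta_0(t) \;=\; (1-t)\,P(t), \qquad P(t) := \det\bigl(1 - W^{-1} t\bigr)\bigr|_{V_-}. \]
For $\Delta_+$ I would write the matrix of $\tau_+$ in a basis of $V_+$ adapted to the splitting $V_- \oplus \ZZ u \oplus \ZZ w$, using $\tau_+(v) = W(v) - \langle v, E\rangle u$ on $V_-$, $\tau_+(u) = w + W(E) - (g-1)u$, and $\tau_+(w) = u$, then apply a Schur-complement reduction with respect to the $2\times 2$ block on $\{u,w\}$. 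This yields
\[ \Delta_+(t) \;=\; P(t)\,\bigl(1 + (1-g)t - t^2 + t\,R(t)\bigr), \qquad R(t) := \sum_{k\ge 0}\langle E, W^k E\rangle\,t^k, \]
with $R(t)$ emerging from the Neumann series expansion of $(W-tI)^{-1}$ paired through the intersection form against $E$. The factor $P(t)$ cancels in the ratio, and the theorem reduces to the single generating-function identity
\[ R(t) \;=\; \frac{2g-2+r}{1-t} \;-\; \sum_{i=1}^r \frac{1}{1-t^{\alpha_i}}. \]

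The main obstacle is establishing this last identity, where the star structure and the genus interact. My strategy is to exploit the fact that the per-branch Coxeter products $W_i := s_{\E{1}{i}} \dotsm s_{\E{\alpha_i-1}{i}}$ pairwise commute (distinct branches are mutually orthogonal and each meets $E$ only at the endpoint $\E{1}{i}$), so $W = \prod_i W_i$ and $W^k(E) = (1-r)E + \sum_i W_i^k(E)$. Using the standard orbit behaviour $W_i(\E{j}{i}) = \E{j+1}{i}$ for $j < \alpha_i - 1$ and $W_i(\E{\alpha_i-1}{i}) = -(\E{1}{i} + \dotsm + \E{\alpha_i-1}{i})$ of the $A_{\alpha_i - 1}$ Coxeter element, a short induction shows $W_i^k(E) = E + \E{1}{i} + \dotsm + \E{k}{i}$ for $0 \le k \le \alpha_i - 1$ and $W_i^{\alpha_i}(E) = E$. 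Consequently $\langle E, W_i^k E\rangle$ equals $2g-2$ when $\alpha_i \mid k$ and $2g-1$ otherwise, so the per-branch generating function is $(2g-1)/(1-t) - 1/(1-t^{\alpha_i})$. Summing these contributions together with the offset $(1-r)(2g-2)/(1-t)$ coming from the common $E$-shift recovers the required formula for $R$, and comparison with the Riemann--Roch expression for $p_A$ completes the proof.
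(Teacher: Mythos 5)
Your proposal is correct, and all of its key claims check out: the block formulas for $\tau_+$ on $V_-\oplus\ZZ u\oplus\ZZ w$ are right, the Schur-complement identity $\Delta_+(t)=P(t)\bigl(1+(1-g)t-t^2+tR(t)\bigr)$ does hold (the cross term is $t^2\,b^T(1-tW)^{-1}c=-tR(t)+(2g-2)t$ with $b=-\langle\cdot,E\rangle$ and $c=W(E)$, which produces exactly the claimed factor), the closed form $R(t)=\tfrac{2g-2+r}{1-t}-\sum_i\tfrac{1}{1-t^{\alpha_i}}$ is correct, and it matches the Riemann--Roch expression for $p_A$. However, your organization differs from the paper's. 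The paper splits the theorem into a general lattice statement and a geometric one: it first computes the ratio $\Delta_+/\Delta_0$ directly, for \emph{any} even lattice almost generated by roots, as the determinant of the operator $h=(1-\tau_+^{-1}t)(1-\tau_0^{-1}t)^{-1}$, which differs from the identity only by a rank-one term in the $(u-w)$-direction, so $\det(h)$ is read off from a single matrix entry; this gives $\Delta_+/\Delta_0=\sum_k\bigl(1-g+\sum_{\ell<k}\langle E,\tau_0^\ell E\rangle\bigr)t^k+g+t$ without ever isolating the common factor $P(t)$. It then identifies the partial sums $\sum_{\ell<k}\langle E,\tau_0^\ell E\rangle$ with $\deg D^{(k)}$ (your orbit computation, in a mirror-image labelling of the chains, with the same pairings) and matches coefficients with $\dim A_k$ via Riemann--Roch, never writing closed-form rational functions. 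Your route instead computes both characteristic polynomials explicitly, cancels $P(t)$, and compares explicit rational functions; this is more computational, but it makes the factorization $\Delta_0=(1-t)P(t)$ visible and, through your identity for $R(t)$, essentially recovers the earlier formula $p_A=\Delta_+/\psi_A$ with $\psi_A=(1-t)^{2-2g-r}\prod_i(1-t^{\alpha_i})$ discussed in the paper's remark citing [Eb], whereas the paper's rank-one trick yields a cleaner and more general lattice proposition. One point you must make explicit to complete the argument: your use of Riemann--Roch for $k\geq2$ requires $\deg D^{(k)}>2g-2$, which is automatic only for $g\geq2$; for $g\leq1$ it needs Looijenga's inequality $\sum_i 1/\alpha_i<r+2g-2$, which the paper invokes for exactly this purpose (and which also guarantees that $V_-$ is nondegenerate, so that the normalization $\det(1-\tau^{-1}t)$ versus $\det(1-\tau t)$ is immaterial, a point you silently pass over when mixing $W^{-1}$ and $W$).
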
 

\begin{remark}
In \cite{E03}, the following formula is proved: $p_A=\Delta_+/\psi_A$ where  $\psi_A(t)= (1-t)^{2-2g -r} \prod_{i=1}^r (1-t^{\alpha_i})$.
Note that Eichler-Siegel transformations are used. In view of the approach of this work, the denominator $\psi_A$ amounts to a factorisation of $\Delta_0$. In \cite[Remark 1]{E03} it is also observed that $\psi_A= \Delta_0$ if $g=0$.
\end{remark}

%%%%%%%%%%%%%%%%%%%%%%%%%%%
\section{The proof}
The proof of Theorem~\ref{thm:main} consists of two steps. First, we consider a general even lattice $V_-$ possessing a basis with at most one non-root. For such lattices, we develop the rational function $\Delta_+/\Delta_0$ into a formal power series. This is inspired by Lenzing's approach \cite{L1} but different both in details and in spirit; for the latter, see Remark \ref{lem:curve} below.
In the second step, we show that this power series coincides with the Poincar\'e series if we start with the lattice coming from a Fuchsian singularity. 

\subsection{Hilbert-Poincar\'e series for even lattices almost generated by roots}

Let $(V_-, \langle -,- \rangle)$ be an even lattice with basis $e_1,\dotsc,e_{n-1},e$ where $e_1,\dotsc,e_{n-1}$ are roots. Here $n \geq 1$ and $V_-=\ZZ e$ if $n=1$. Define $g\in\ZZ$ by $\langle e, e \rangle=2g-2$. Consider the lattices $V_0:=V_-\oplus\ZZ u$ and $V_+:=V_-\oplus U$ defined as before. Let $\tau_0 = s_{e_1} \dotsm s_{e_{n-1}} \psi_{u,e}$ and $\tau_+=\tau_0 s_{u-w}$. Note that $\tau_0$ can be seen as an isometry of $V_+$ or as an isometry of $V_0$. We write $\tau_0|_{V_0}$ if we mean the latter.

Define a Hilbert-Poincar\'e series corresponding to $V_0$ and $e$ as follows:
\[ P_{(V_0,e)}(t):= \sum_{k=0}^\infty \Big(1-g+\sum_{\ell=0}^{k-1} 
                   \langle e,\tau_0^\ell(e)\rangle\Big) \ t^k. \]

\begin{proposition}  \label{prop:LP}
We have $\det(1- \tau_+^{-1} t) \det(1- \tau_0|_{V_0}^{-1} t)^{-1} = P_{(V_0,e)}(t)+g+t$.
\end{proposition}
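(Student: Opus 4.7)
The plan is to compute both sides explicitly in a common basis of $V_+$ and match them as formal power series. Working in the ordered basis $(e_1,\dots,e_{n-1},e,u,w)$ and combining the formulas for $\psi_{u,e}$ from Example~\ref{ex:U} with the facts that each $s_{e_i}$ fixes $u$ and $w$ while $s_{u-w}$ swaps $u\leftrightarrow w$ and fixes $V_-$, I obtain the block matrices
\[
\tau_0=\begin{pmatrix}S&0&S(e)\\-\vec{c}^{T}&1&-(g-1)\\0&0&1\end{pmatrix},\qquad
\tau_+=\begin{pmatrix}S&S(e)&0\\-\vec{c}^{T}&-(g-1)&1\\0&1&0\end{pmatrix},
\]
where $S$ is the restriction of $s_{e_1}\cdots s_{e_{n-1}}$ to $V_-$ and $\vec{c}^{T}=(\langle e_1,e\rangle,\dots,\langle e_{n-1},e\rangle,\,2g-2)$.

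Using $\det(1-\tau^{-1}t)=(\det\tau)^{-1}\det(\tau-tI)$, the block lower-triangular form of $\tau_0|_{V_0}$ gives $\Delta_0(t)=(\det S)^{-1}(1-t)\det(S-tI)$ directly. For $\Delta_+$, I expand $\det(\tau_+-tI)$ along the last row (only two nonzero entries, in the $u$- and $w$-columns) and apply the Schur complement formula to the remaining $2\times 2$ block; after simplification,
\[
\frac{\Delta_+(t)}{\Delta_0(t)}=\frac{1+(1-g)t-t^{2}+t\,\vec{c}^{T}(S-tI)^{-1}S(e)}{1-t}.
\]

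To match this with $P_{(V_0,e)}(t)+g+t$, the identity $(1-t)P_{(V_0,e)}(t)=(1-g)+t\sum_{\ell\geq 0}\langle e,\tau_0^{\ell}(e)\rangle t^{\ell}$, immediate from the definition, together with $(1-t)(g+t)=g+(1-g)t-t^{2}$, reduces the proposition to the cleaner statement $\vec{c}^{T}(S-tI)^{-1}S(e)=\sum_{\ell\geq 0}\langle e,\tau_0^{\ell}(e)\rangle t^{\ell}$. A one-line induction from $\tau_0(v)=S(v)-\langle v,e\rangle u$ for $v\in V_-$ and $\tau_0(u)=u$ shows $\tau_0^{\ell}(e)\equiv S^{\ell}(e)\pmod{\ZZ u}$, so the right-hand series equals $\sum_{\ell}\langle e,S^{\ell}(e)\rangle t^{\ell}$ (using $e\perp u$); and expanding $(S-tI)^{-1}S=\sum_{k\geq 0}t^{k}S^{-k}$ together with $\vec{c}^{T}(x)=\langle x,e\rangle$ and $S\in\text{O}(V_-)$ identifies the left-hand side with the same series.

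The main obstacle is purely bookkeeping: the cofactor signs in $\det(\tau_+-tI)$, the relation $\det\tau_+=-\det S$ versus $\det\tau_0=\det S$, and checking that the ``suitable normalization'' makes both $\Delta_0$ and $\Delta_+$ monic (the parities of $\dim V_0=n+1$ and $\dim V_+=n+2$ together with $\det S=(-1)^{n-1}$ conspire to give leading coefficient $1$). Nothing here is conceptually deep, but the matrix manipulations are error-prone.
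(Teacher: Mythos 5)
Your proof is correct: I checked the block matrices for $\tau_0$ and $\tau_+$ in the basis $(e_1,\dotsc,e_{n-1},e,u,w)$, the determinant relations $\det\tau_0|_{V_0}=\det S$ and $\det\tau_+=-\det S$, the Schur-complement formula for $\Delta_+/\Delta_0$, the reduction via $(1-t)P_{(V_0,e)}(t)=(1-g)+t\sum_{\ell\geq0}\langle e,\tau_0^\ell(e)\rangle t^\ell$, and the closing identity $\vec{c}^{T}(S-tI)^{-1}S(e)=\sum_{\ell\geq0}\langle e,\tau_0^\ell(e)\rangle t^\ell$; all of these hold. Your route is, however, genuinely different from the paper's. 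The paper never computes $\Delta_+$ and $\Delta_0$ separately: it forms the single operator $h=(1-\tau_+^{-1}t)(1-\tau_0^{-1}t)^{-1}$ on $V_+\pseriesl t\pseriesr$ and, unravelling $s_{u-w}$, finds $h=1-\sum_{k\geq1}\langle\,\cdot\,,\tau_0^k(u-w)\rangle t^k(u-w)$, i.e.\ the identity plus a rank-one perturbation in the direction $u-w$; consequently $\det(h)$ can be read off from $h(u-w)=\det(h)(u-w)$, and the ratio of characteristic polynomials becomes $(1-t)\bigl(1-\sum_{k\geq1}\langle u-w,\tau_0^k(u-w)\rangle t^k\bigr)$, which is then rewritten as $P_{(V_0,e)}(t)+g+t$ using the definition of $\psi_{u,e}$. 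Your proof instead evaluates numerator and denominator separately by cofactor expansion and a Schur complement, and identifies the resolvent series through the Neumann expansion $(S-tI)^{-1}S=\sum_{k\geq0}t^kS^{-k}$ together with the isometry property of $S$. What the paper's argument buys is basis-independence and brevity — the rank-one structure makes the determinant ratio visible at a glance, with no cofactor signs or normalization issues to track; what yours buys is a completely mechanical, self-contained computation that makes explicit the role of $S=s_{e_1}\dotsm s_{e_{n-1}}$ acting on $V_-$. Both proofs ultimately rest on the same two ingredients: a geometric-series inversion of $1-\tau_0^{-1}t$, and the observation that $\tau_0^\ell(e)$ agrees with $S^\ell(e)$ modulo the isotropic direction $u$, so that only the sums $\sum_\ell\langle e,\tau_0^\ell(e)\rangle$ survive the pairing.
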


\begin{proof}
Borrowing an idea of \cite[\S18]{L1}, we use formal power series to invert $1- \tau_0^{-1} t$. Consider the linear operator on
 $V_+\pseriesl t\pseriesr :=\ZZ\pseriesl t\pseriesr\otimes V_+$ given by
\begin{align*}
 h &:= (1-\tau_+^{-1}  t)(1-\tau_0^{-1} t)^{-1}
     = (1- s_{u-w} \tau_0^{-1} t) \Big(\sum\nolimits_{k\geq0} \tau_0^{-k} t^k \Big) \\
   &\phantom{:}= 1 - \sum\nolimits_{k\geq1}\big\langle\,\cdot\,,\tau_0^k(u-w)\big\rangle t^k (u-w) ,
\end{align*}
where the last equation follows from an easy computation unravelling $s_{u-w}$. For all $v\in V_+$, we thus find 
$h(v)\in v+\ZZ\pseriesl t\pseriesr (u-w)$, hence $h(u-w) = \det(h) (u-w)$. Furthermore invoking
$\det(1- \tau_0^{-1} t) = (1-t) \det(1- \tau_0|_{V_0}^{-1} t)$, we see that
\begin{align*}
    \det(1- \tau_+^{-1} t) \det(1 - \tau_0|_{V_0}^{-1} t)^{-1}
 &= (1-t) \det(h) \\
 & = (1-t) \Big(1-\sum\nolimits_{k\geq1}\langle u-w,\tau_0^k(u-w)\rangle t^k \Big) \\
 &= 1+t - \sum\nolimits_{k\geq1} \big\langle u-w, (\tau_0^k-\tau_0^{k-1})(u-w) \big\rangle t^k .
\end{align*}
Plugging in the definition of $\psi_{u,e}$ now yields
\[ \langle u-w, (\tau_0^k-\tau_0^{k-1})(u-w) \rangle =
   \tfrac{1}{2} \langle e,e \rangle - \langle u-w, \tau_0^k(e) \rangle  \]
and $\langle \tau_0^k(e),u-w \rangle = \sum_{\ell=0}^{k-1} \langle e,\tau_0^\ell(e)\rangle$.
Putting these pieces together gives the claim.
\end{proof}

\subsection{Poincar\'e series of Fuchsian singularities}
Let now $V_0$ be the lattice introduced in Section \ref{sec:result}, generated by $u$ and the components of $\bE$. Theorem \ref{thm:main} follows at once from Propositions \ref{prop:LP} and \ref{prop:Poincare}.

\begin{proposition} \label{prop:Poincare}
The Poincar\'e series of a Fuchsian singularity is $P_{(V_0, E)}(t) +g+ t$.
\end{proposition}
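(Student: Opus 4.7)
The plan is to match the $k$-th coefficient of $P_{(V_0,E)}(t)+g+t$ with $\dim A_k = h^0(Z,\calO_Z(D^{(k)}))$, combining a lattice-theoretic computation of $\langle E, \tau_0^k(E)\rangle$ with Riemann--Roch on $Z$.

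First I would decouple $\tau_0$ into its reflection and Eichler--Siegel parts. With $C := C_1 \cdots C_r$ and $C_i := s_{\E{1}{i}} \cdots s_{\E{\alpha_i-1}{i}}$, the Eichler--Siegel factor simplifies on $V_0$ to $\psi_{u,E}(v) = v - \langle v, E\rangle u$, so $\tau_0(v) = C(v) - \langle v, E\rangle u$. A short induction yields $\tau_0^k(v) - C^k(v) \in \ZZ u$; since $\langle E, u\rangle = 0$, this gives the reduction $\langle E, \tau_0^k(E)\rangle = \langle E, C^k(E)\rangle$. The $r$ chains are mutually orthogonal, so $\E{1}{j}$ is fixed by $C_i$ for $i\neq j$, and a second induction produces
\[ C^k(E) = E + \sum_{i=1}^r \sum_{\ell=0}^{k-1} C_i^\ell(\E{1}{i}). \]

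Next I would exploit that each $C_i$ is the Coxeter element of the $A_{\alpha_i-1}$-subsystem and acts on the chain by a cyclic shift of period $\alpha_i$: concretely $C_i^\ell(\E{1}{i}) = \E{\ell+1}{i}$ for $0\le\ell\le\alpha_i-2$ and $C_i^{\alpha_i-1}(\E{1}{i}) = -\sum_{j=1}^{\alpha_i-1}\E{j}{i}$, so each full orbit sums to zero. Pairing with $E$ via $\langle E, \E{j}{i}\rangle = \delta_{j,1}$ and summing over $\ell$, using $\#\{\ell \in [0,k) : \alpha_i \mid \ell\} = \lceil k/\alpha_i \rceil$, gives
\[ \sum_{\ell=0}^{k-1}\langle E, \tau_0^\ell(E)\rangle = k(2g-2) + \sum_{i=1}^r \bigl(k - \lceil k/\alpha_i\rceil\bigr) = k(2g-2) + \sum_{i=1}^r \bigl[k(\alpha_i-1)/\alpha_i\bigr] = \deg D^{(k)}. \]

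Finally I would invoke Riemann--Roch on the smooth projective curve $Z$ of genus $g$: $\dim L(D^{(k)}) = \deg D^{(k)} + 1 - g + h^1(\calO_Z(D^{(k)}))$. The $h^1$-corrections contribute $g$ at $k = 0$ (from $\calO_Z$), $1$ at $k = 1$ (since $D^{(1)} = D_0 \sim K_Z$), and $0$ for $k \ge 2$: the hyperbolic signature constraint for a Fuchsian singularity forces $\deg D^{(k)} > 2g - 2$ in this range, so $K_Z - D^{(k)}$ has negative degree and $h^0 = 0$. Assembled into a generating series, the $h^1$-contributions produce exactly the correction $g + t$ while the Euler characteristics reproduce $P_{(V_0, E)}(t)$.

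The main obstacle is the orbit-sum bookkeeping: once the cyclic nature of each $C_i$ is exploited, the identification $\langle E, \tau_0^k(E)\rangle = \deg D^{(k+1)} - \deg D^{(k)}$ becomes transparent. A subsidiary point requiring care is the $h^1$-vanishing for $k \ge 2$, which amounts to the standard dimension count for weight-$2k$ automorphic forms on a cocompact Fuchsian group of the first kind.
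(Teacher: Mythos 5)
Your proof is correct and follows essentially the same route as the paper's: eliminate the radical direction $u$ (so that $\tau_0$ reduces to the product of the chain Coxeter elements $C_i$), identify $\sum_{\ell=0}^{k-1}\langle E,\tau_0^\ell(E)\rangle$ with $\deg D^{(k)}$ via the period-$\alpha_i$ orbit structure, and finish with Riemann--Roch, where the correction $g+t$ accounts for $h^1$ at $k=0,1$ and the hyperbolicity inequality $\sum_i 1/\alpha_i < r+2g-2$ forces $h^1=0$ for $k\geq 2$. The only (harmless) deviation is notational: the paper's formulas implicitly place the central curve $E$ adjacent to $\E{\alpha_i-1}{i}$ rather than $\E{1}{i}$, but the resulting pairing sums, and hence the conclusion, are identical under either labeling.
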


\begin{proof}
%Let $\e:=E$, $\be{j}{i}:=\E{j}{i}$.
The element $u$ spans the radical $\rad(V_0)$ of the lattice $V_0$. 
Let $\overline{V}_0= V_0/\rad(V_0)$. For an automorphism $\sigma$ of $V_0$, we
denote by the same letter the induced automorphism
 $\sigma\colon \overline{V}_0 \to \overline{V}_0$.
By more abuse of notation, we will denote elements of $V_0$ and their classes in 
$\overline{V}_0$ by the same letter. In order to compute the series $P_{(V_0,\e)}(t)$, 
it suffices to consider the automorphism $\tau_0$ of $\overline{V}_0$. 
Now one can easily see that, on the quotient space, $\psi_{u,\e}=\text{id}_{\overline{V}_0}$. 
Therefore we have, again on the quotient $\overline{V}_0$,
\[ \tau_0 = \tau_1 \dotsm \tau_r \mbox{ where } 
   \tau_i:= s_{\be{1}{i}} \dotsm s_{\be{\alpha_i-1}{i}} \mbox{ for } \ i=1, \dotsc , r. \]
For $\tau_i$ we have
 $\tau_i(\e)=\e+ \sum_{j=1}^{\alpha_i-1} \be{j}{i}$, 
 $\tau_i(\be{j}{i})=\be{j+1}{i}$ 
for $j=1, \dotsc , \alpha_i-2$, 
$\tau_i(\be{\alpha_i-1}{i})= - \sum_{j=1}^{\alpha_i-1} \be{j}{i}$, and $\tau_i$ 
is the identity on all other basis elements of $\overline{V}_0$. Therefore,
if $\alpha_i > 2$ (otherwise $\tau_i^2(\e)=\e$)
\[ \tau_i(\e)=\e+ \sum_{j=1}^{\alpha_i-1} \be{j}{i}, \quad
   \tau_i^2(\e)= \e+ \sum_{j=2}^{\alpha_i-1} \be{j}{i}, \quad 
   \dotsc, \quad
   \tau_i^{\alpha_i}(\e)=\e. \]
This implies, for $k>0$,
\begin{equation}  \label{FuchsDiv}
   \big\langle \e, \sum_{\ell=0}^{k-1} \tau_0^\ell \e \big\rangle 
 = \big\langle \e, k\e 
   + \sum_{i=1}^r \big[ k \tfrac{\alpha_i - 1}{\alpha_i} \big]  \be{\alpha_i -1}{i} \big\rangle
 = \langle E, E^{(k)} \rangle 
 = \deg D^{(k)}.
\end{equation}
Here 
$$ E^{(k)} := kE + \sum_{1\leq i\leq r} \sum_{1\leq j<\alpha_i} \big[ \tfrac{kj}{\alpha_i} \big] \E{j}{i}$$
is the total transform of the Weil divisor $k\overline{X}_\infty$ under $\pi:S\to\overline X$ \cite[6.4]{Pinkham78}. We would like to emphasize that the last equality in (\ref{FuchsDiv}) is not just numerical but stems from an isomorphism $\calO_S(E^{(k)})|_E\cong\calO_E(D^{(k)})$, see \cite[\S6]{Pinkham78}.

We have $\deg D^{(k)} > 2g-2$ for any $k>1$. This is obvious if $g>1$; in the remaining cases it follows from the relation $\sum_{i=1}^r \frac{1}{\alpha_i} < r+2g-2$ of \cite[(1.7)]{Looijenga84}. As the degree of $D^{(k)}$ is large enough, we get $\dim L(D^{(k)}) = 1-g+\deg D^{(k)}$ by Riemann-Roch. Finally, the two series coincide for $t=0$ as well since $\dim L(D^{(1)}) = \dim L(D_0)=g$.
\end{proof}

%%%%%%%%%%%%%%%%%%%%%%%%%%%
\section{The interpretation: Lifting Coxeter elements to functors}

The lattice $V_-$ is constructed from the geometry, but its extensions $V_0$ and $V_+$ are purely algebraic, as is the Eichler-Siegel transformation. Similar to our approach in \cite{EP}, we are interested in a geometric interpretation of these invariants. Roughly speaking, this is achieved by constructing, in a natural manner out of the geometry, triangulated categories whose numerical K-groups yield these lattices. In turn, the Coxeter elements can be lifted to invertible functors. The idea of Coxeter functors is certainly not new, see Remark \ref{lem:curve} below.
For principal reasons, our method only applies to a certain (large) class of Fuchsian singularities, those which are negatively smoothable.

\subsection{Negatively smoothable Fuchsian singularities}
Let $\pi\colon S\to\overline{X}$ be the minimal normal crossing resolution of $\overline{X}$. It contains
the total transform of the curve at infinity, $\bE=\widetilde{X}_\infty:=\pi^{-1}(\overline{X}_\infty)$.
This is a configuration of curves, all but one of which are smooth, rational $(-2)$-curves.

We assume that $X$ is \emph{negatively smoothable} (for the definition see
\cite{Pinkham78}). This implies that there is a deformation of $\overline{X}$ with the following properties: all members of the family are partial resolutions of $\overline{X}$ (namely, the singularities at infinity are resolved); each member contains the curve configuration $\bE$; the generic 
fibre is smooth. By \cite[Proposition 6.13]{Pinkham78}, the generic fibre is a smooth K3 surface.

For example, if $(X,x)$ is an isolated hypersurface or complete intersection singularity, then it is negatively smoothable. On the other hand, since the rank of the N\'{e}ron-Severi group of a K3 surface is at most 20, a necessary condition for negative smoothability is $\sum_i\alpha_i\leq19+r$.

\medskip

\noindent
\emph{In the sequel, we assume that $(X,x)$ is negatively smoothable. Let $\pares$ be a generic fibre, it is a smooth K3 surface containing the configuration $\bE$.}

\subsection{Lattices from the K-groups}

Let $\Coh(\pares)$ be the abelian category of coherent sheaves on $\pares$ and $K(\pares)$ its Grothendieck K-group. The Euler pairing on $K(\pares)$, defined as $\chi(A,B)=\sum_i(-1)^i\dim\Ext^i_\pares(A,B)$ for coherent sheaves $A$ and $B$ on $\pares$, is symmetric by Serre duality and the fact that $\pares$ is a K3 surface. Note that  $\chi(A):=\chi(\calO_\pares,A)$ is the Euler characteristic of a sheaf $A\in\Coh(\pares)$. We equip the K-group (and all groups derived from it) with the \emph{negative} Euler pairing.
Let $N(\pares)$ be the numerical K-group which is obtained from $K(\pares)$ by dividing out the radical of the Euler form.

Denote by $\Coh_\bE(\pares)$ the abelian subcategory of $\Coh(\pares)$ consisting of sheaves whose support is contained in $\bE$ and let $K_\bE(\pares)$ be its K-group. Let $N_\bE(\pares)$ be the image of $K_\bE(\pares)$ under $K(\pares)\to N(\pares)$. 

Using the notation of Figure~\ref{Fig1} and choosing a point $p\in E$, we will consider the following sheaves supported on $\bE$
\[ \F{j}{i} := \calO_{\E{j}{i}}(-1), \qquad
    F\in\Pic^{g-1}(E) \text{ with } H^0(F)=0, \qquad
    \widetilde F := F(p) . \]
So $F$ is a line bundle of degree $g-1$ supported on $E$. The condition $H^0(F)=0$ implies $H^1(F)=0$ by Riemann-Roch. Line bundles without global sections make up the complement of the theta-divisor in $\Pic^{g-1}(E)$. In particular, such line bundles are not  unique except for $F=\calO_E(-1)$ if $g=0$.

The classes in $N_\bE(\pares)$ of these sheaves form a basis. This is well-known, but see Subsection \ref{sub:cohomology} for details.
In this setting, we define three sublattices of $N(\pares)$.
Recall that we have equipped the numerical K-groups with the negative Euler pairing.
\begin{align*}
 V'_0 &:= N_\bE(\pares) && \text{with } u':=[F]-[\widetilde F]=-[k(p)], \\
 V'_+ &:= N_\bE(\pares)\oplus\ZZ[\calO_\pares] && \text{with } w':=[\calO_\pares]+u'=[\calI_p], \\
 V'_- &:= U'^\perp = N_\bE(\pares)\cap[\calO_\pares]^\perp && \text{with } U':=\ZZ u'+\ZZ w'.
\end{align*}
Note that the class $-u'$ is represented by the skyscraper sheaf $k(p)$ of $p$ and that $w'$ is represented by the ideal sheaf $\calI_p$ of $p$. In the derived category, the class $u'$ is thus given by the shift $k(p)[1]$. It is worth pointing out that the structure sheaves of different points are in general not identified in $K(\pares)$ but that they all represent the same class in $N(\pares)$. Similarly, all choices for $F$ lead to the same class in $N(\pares)$. The alternative description of $V'_-$ follows from the next lemma which also shows that these lattices are indeed isometric to the ones used before.

\begin{lemma} \label{lem:Niso}
The map $\eta_-\colon V_- \to V'_-$ defined by $\E{j}{i}\mapsto\F{j}{i}$ and
$E\mapsto F$ is an isometry. The extensions $\eta_0\colon V_0\to V'_0$ and $\eta_+\colon V_+\to V'_+$
of $\eta_-$ mapping $u\mapsto u'$ and $w\mapsto w'$ are isometries.
\end{lemma}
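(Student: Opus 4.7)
My plan is a direct Hirzebruch--Riemann--Roch computation on the K3 surface $\pares$. Since $K_\pares=0$, the Euler pairing on $N(\pares)$ is symmetric and, in the Mukai formalism, one has $-\chi(A,B)=\langle v(A),v(B)\rangle_{\mathrm M}$, where $v(A)=\chern(A)\sqrt{\mathrm{td}_\pares}$ is the Mukai vector and $\langle(r,c,s),(r',c',s')\rangle_{\mathrm M}:=c\cdot c'-rs'-r's$. Because the paper equips $N(\pares)$ with the \emph{negative} Euler pairing, its bilinear form \emph{is} the Mukai pairing on Mukai vectors, so the lemma reduces to Mukai-pairing identities.

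First I would compute, by Riemann--Roch, the Mukai vectors of the relevant classes: $v(\F{j}{i})$ and $v(F)$ are both of type $(0,c,0)$ (for $F$ the vanishing of the third component is our hypothesis $H^0(F)=0$, hence $\chi(F)=0$); $v(\widetilde F)=(0,[E],1)$; $v(\calO_\pares)=(1,0,1)$; $v(k(p))=(0,0,1)$, so $v(u')=(0,0,-1)$; and $v(w')=v(\calI_p)=(1,0,0)$. The key observation is that two rank-zero Mukai vectors pair by $c\cdot c'$ \emph{independently} of their third components. This immediately gives $\eta_-$: the pairings $\langle\F{j}{i},\F{j'}{i'}\rangle$, $\langle F,\F{j}{i}\rangle$, and $\langle F,F\rangle=E^2=2g-2$ reduce to the intersection numbers of the corresponding curves in $\bE$, matching the defining form on $V_-$.

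For $\eta_0$ the extra generator $u'$ has $v(u')$ with $c=0$, hence pairs trivially with every rank-zero class, and $\langle u',u'\rangle=0$: this matches $V_0=V_-\oplus\ZZ u$. For $\eta_+$ the generator $w'$ has $v(w')=(1,0,0)$, so $\langle w',w'\rangle=0$, and $w'$ is orthogonal to any $(0,c,0)$-class -- in particular to $\eta_-(V_-)$, whose Mukai vectors all have third component $0$ -- while the Mukai formula yields $\langle u',w'\rangle=1$. This reproduces the hyperbolic $U$-summand and hence $V_+=V_-\oplus U$.

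The alternative description $V_-'=N_\bE(\pares)\cap[\calO_\pares]^\perp$ falls out of the same data: $\chi(\F{j}{i})=\chi(F)=0$ but $\chi(\widetilde F)=1$, so on the basis $\{[\F{j}{i}],[F],[\widetilde F]\}$ of $N_\bE(\pares)$ (asserted in Subsection \ref{sub:cohomology}) intersecting with $[\calO_\pares]^\perp$ drops exactly the $[\widetilde F]$-direction. The main obstacle is purely bookkeeping: the (negative) Euler versus (positive) Mukai sign conventions, and checking that the class $[F]\in N(\pares)$ is well-defined independently of the chosen representative -- immediate, since the Mukai vector of a line bundle on $E$ depends only on its degree. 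Once the Mukai vectors are written down, every pairing is a one-line check and the basis statement of Subsection \ref{sub:cohomology} provides the required surjectivity of $\eta_-,\eta_0,\eta_+$.
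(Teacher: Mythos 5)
Your proof is correct, but it takes a genuinely different route from the paper's. You push everything through the Mukai-vector formalism: write $v(A)=\chern(A)\sqrt{\mathrm{td}_\pares}$, invoke $-\chi(A,B)=\langle v(A),v(B)\rangle_{\mathrm{M}}$, and reduce every pairing to linear algebra in $(r,c,s)$-coordinates. The paper instead works intrinsically inside the derived category: it computes $\chi(k(p),k(p))$ via Serre duality and smoothness of $\pares$, shows that $\eta_-$ lands in $U'^\perp$ using the short exact sequence $0\to\calO_\pares(-C)\to\calO_\pares\to\calO_C\to0$, and identifies $-\chi(\calO_C,\calO_{C'})$ with intersection numbers by counting $\Ext^1$ at transversal intersection points (and via the same exact sequence when $C'=C$). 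This is a deliberate stylistic choice: the paper defers the Chern-character/Mukai comparison to Subsection \ref{sub:cohomology} precisely so that the lemma is proved with invariants intrinsic to the triangulated category, whereas your route needs Grothendieck--Riemann--Roch for the embeddings $C\hookrightarrow\pares$ to know $v(i_*L)=(0,[C],\chi(L))$. What your approach buys is brevity and a sharper surjectivity argument: the paper disposes of bijectivity with a terse rank count, while you exhibit $N_\bE(\pares)\cap[\calO_\pares]^\perp$ explicitly as the $\ZZ$-span of the $[\F{j}{i}]$ and $[F]$ by evaluating $\chi$ on the asserted basis of $N_\bE(\pares)$. Two small repairs are needed. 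First, $\chi(F)=0$ does not follow from the hypothesis $H^0(F)=0$; it follows from Riemann--Roch on $E$ since $\deg F=g-1$, and it is this vanishing of $\chi$ together with $H^0(F)=0$ that forces $H^1(F)=0$. Second, since $V'_-$ is \emph{defined} as $U'^\perp$, you should add the one-line converse check that any $x+d[\calO_\pares]\in U'^\perp$ with $x\in N_\bE(\pares)$ has $d=0$ (pair with $u'$, using $\langle[\calO_\pares],u'\rangle=1$ and the fact that rank-zero classes pair trivially with $u'$) and then $\chi(x)=0$ (pair with $w'$), so that $U'^\perp=N_\bE(\pares)\cap[\calO_\pares]^\perp$ and your surjectivity statement indeed applies to $V'_-$ as defined.
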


\begin{proof}
First, $U'$ is a unimodular hyperbolic plane: $\Ext^0(k(p),k(p))=\CC$ implies via Serre duality $\Ext^2(k(p),k(p))=\CC$, and $\Ext^1(k(p),k(p))=\CC^2$ as $\pares$ is a smooth surface. This shows $\chi(u',u')=0$. From $w'=[\calO_\pares]+u'$ we get $-\chi(u',w')=1$. Finally, $\chi(w',w')=0$ follows from $\chi(\calO_\pares)=2$ for the K3 surface $\pares$.

We now show that $\eta_-$ is well-defined, i.e.\ takes values in $U'^\perp$. Let $C$ be any irreducible component of $\bE$. The short exact sequence $0\to\calO_\pares(-C)\to\calO_\pares\to\calO_C\to0$ yields $\chi(\calO_C,u')=0$. Next, $[\calO_C(D)]=[\calO_C]-\deg(D)u'$ for any $D\in\Div(C)$ and hence $\chi(\calO_C(D),u')=0$. Furthermore, $\chi(\calO_C)=1-g_C$, so that $\deg(D)=g_C-1$ implies $\chi(\calO_C(D))=0$ and so $\chi(\calO_C(D),w')=0$.

The map $\eta_-$ is a bijection as it is injective and $V'_-$ and $V_-$ are free abelian groups of the same rank. It only remains to show that $\psi_-$\ respects the pairings. For any two irreducible curves $C$ and $C'$ on $\pares$, their intersection number can be computed as $C.C'=-\chi(\calO_C,\calO_{C'})$. 
This is immediate if $C$ and $C'$ are transversal, for then the only non-vanishing summand in the Euler pairing is $\dim\Ext^1(\calO_C,\calO_{C'})$, which is the number of intersection points. In case $C'=C$, the short exact sequence from above yields $-\chi(\calO_C,\calO_C)=\chi(\calO_\pares(C)|_C)-\chi(\calO_C)=\text{deg}(\calO_C(C))$, the last term being the self-intersection number by definition.
To conclude, just observe that the quantity $\chi(\calO_C(D),\calO_{C'}(D'))$ 
is not affected by the choice of $D\in\Div(C)$, $D'\in\Div(C')$, as above.
\end{proof}

\begin{remark} As in \cite{EP}, we can consider the group $N_\bE(S)$ where $\pi\colon S\to\overline{X}$ is the minimal normal crossing resolution of the compactification $\overline{X}$. Then the lattice $V_0$ is also isometric to this group endowed with the negative Euler pairing. In particular, the proof of Proposition \ref{prop:Poincare} was carried out with this realisation of $V_0$, which does not rely on an assumption of negative smoothability. However, it is not possible to extend the isometry $V_0\isom N_\bE(S)$ to an isometry of $V_+$ with a sublattice of $N(S)$.
\end{remark}

\subsection{Lifting Coxeter elements to functors}

Denote by $\DDD^b(\pares)$ the bounded derived category of coherent sheaves on $\pares$. Clearly, a triangle autoequivalence $\varphi\colon \DDD^b(\pares)\isom\DDD^b(\pares)$ descends to isometries $\varphi^K\in\text{O}(K(\pares))$ and $\varphi^N\in\text{O}(N(\pares))$ between (numerical) K-groups. We employ two types of geometrically defined functors in order to lift $\tau_0\in\text{O}(V_0)$ and $\tau_+\in\text{O}(V_+)$ to autoequivalences of $\DDD^b(\pares)$.

\paragraph{Spherical twists.} A coherent sheaf $G$ on a K3 surface is \emph{spherical} if $\Hom(G,G)=\CC$ and $\Ext^1(G,G)=0$. For such a sheaf, the functor $\TTT_G$ defined by distinguished triangles
 $\Hom^\bullet(G,A)\otimes G\to A \to\TTT_G(A)$ for any $A\in \DDD^b(\pares)$, is an autoequivalence of
$\DDD^b(\pares)$. (For a proof and the correct definition of spherical in the general context, see
\cite[\S8.1]{Huy}.) It is easy to see that $\TTT_G|_{G^\perp}=\text{id}$ where $G^\perp=\{A\in\DDD^b(\pares)\mid\Hom^\bullet(G,A)=0\}$ and that $\TTT_G(G)\cong G[-1]$.
Hence, the spherical twist induces the reflection $\TTT_G^K=s_{[G]}$ where $[G]\in K(\pares)$ is by sphericality a root (for the negative Euler pairing); analogously $\TTT_G^N=s_{[G]}$.

Note that $\calO_\pares$ is a spherical sheaf. If $i:C\hookrightarrow\pares$ is the embedding of a smooth, rational $(-2)$-curve, then $i_*\calO_C(n)$, abusively denoted by $\calO_C(n)$, is spherical for any $n\in\ZZ$, since $C$ is rigid.

\paragraph{Line bundle twists.} A line bundle $L\in\text{Pic}(\pares)$ gives rise to the auto\-equivalence $\MMM_L\colon \DDD^b(\pares)\to \DDD^b(\pares)$, $A\mapsto L\otimes A$. 
% The induced map on K-groups is just multiplication by $[L]$. 
Decompose $[L]=w+\ell+du$ in $N(\pares)$, where $d\in\ZZ$ and $\ell\in N(\pares)$ with $\chi(w,\ell)=\chi(u,\ell)=0$. We claim that $\MMM_L^N=\psi_{u,\ell}=m_\ell$ is an Eichler-Siegel transformation.

First, as $\pares$ is a K3 surface, we have $\chi(L,L)=\chi(\calO_\pares)=2$. Together with
$\chi(L,L)=\chi(w+\ell+du,w+\ell+du)=\chi(\ell,\ell)-2d$, this shows $d=\frac{1}{2}\chi(\ell,\ell)-1$.
% Likewise, $\chi(L)=\chi(w-u,w+\ell+du)=-d+1$ and $d=1-\chi(L)$.
The claim follows from $\MMM_L^N(u)=u$, $\MMM_L^N(w)=w+\ell+\frac{1}{2}\chi(\ell,\ell)u$ and $\MMM_L^N(v)=v+\chi(\ell,v)u$ for $v\in U^\perp$. The first two equations are obvious. For the third, without loss of generality assume $v=[D]$ with $D\in\Pic(C)$, $\deg(D)=g_C-1$ and write $L=\calO_\pares(A-A')$ with ample, effective divisors $A$ and $A'$, both meeting $C$ transversally. The sequences $0\to L|_C\to\calO_\pares(A)_C\to\calO_{A'\cap C}\to0$ and $0\to\calO_C\to\calO_\pares(A)|_C\to\calO_{A\cap C}\to0$ are exact by the transversality assumptions, leading to $[L\otimes\calO_C]=[\calO_C]+ku$ and $[L^\vee]=w-\ell+d'u$ for some $k,d'\in\ZZ$. Hence, $\MMM_L^N(v)=v+k'u$ and the coefficient $k'$ is readily computed as 
 $-k' = \chi(w,v+k'u) = \chi(\calO_\pares,L\otimes D)
      = \chi(L^\vee,D)= \chi(w-\ell+d'u,v) = -\chi(\ell,v)$.

As an example, $[\calO_\pares(E)]=w+[F]+du$ for the smooth curve $E$ of genus $g$ in $\pares$. Hence
$\MMM_{\calO_\pares(E)}^N=m_{E}$, identifying $\eta_-(E)=[F]$.

\bigskip
\noindent
Consider the full triangulated subcategory $\DDD^b_\bE(\pares)$ consisting of complexes whose support is contained in $\bE$ (in other words, which are exact off $\bE$). 
% By the next lemma, this category is actually the derived category of $\Coh_\bE(\pares)$. 
The following full triangulated subcategories of $\DDD^b(\pares)$ will be used:
\begin{align*}
 \DDD_-    &:= \DDD^b_\bE(\pares) \cap \calO_\pares^\perp, \\
 \DDD_0 \, &:= \DDD^b_\bE(\pares) , \\ 
 \DDD_+    &:= \langle \DDD^b_\bE, \calO_\pares\rangle,
\end{align*}
i.e.\ $\DDD_+$ is the smallest full triangulated subcategory of $\DDD^b(\pares)$ containing $\DDD_0$ and $\calO_\pares$ (this decomposition is not semiorthogonal). Note that $\E{j}{i}\in\DDD_-$ and $F\in\DDD_-$ by construction.

\begin{lemma} We have:

\smallskip
\noindent
\begin{tabular}{rl}
\upshape{(i)}  & $N(\DDD_-)=V'_-$ and $N(\DDD_+)=V'_+$. \\
\upshape{(ii)} & $V'_0$ is the image of $K(\DDD_0)\hookrightarrow K(\DDD_+)\to N(\DDD_+)$.
%\upshape{(i)}   & The canonical functor $\DDD^b(\Coh_\bE(\pares))\to\DDD^b_\bE(\pares)$ is an equivalence. \\
\end{tabular}
\end{lemma}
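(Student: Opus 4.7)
The plan is to identify each of the three numerical K-groups with an explicit sublattice of $N(\pares)$, proceeding in the order $N(\DDD_+)=V'_+$, then (ii), and finally $N(\DDD_-)=V'_-$.

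I would begin with $N(\DDD_+)=V'_+$. Since $\DDD_+=\langle\DDD_0,\calO_\pares\rangle$ by definition, the K-group $K(\DDD_+)$ is generated as an abelian group by $K(\DDD_0)$ together with $[\calO_\pares]$: every object of $\DDD_+$ is built by distinguished triangles from $\DDD_0$ and shifts of $\calO_\pares$, and this passes to the K-group. Hence, the image in $N(\pares)$ is $N_\bE(\pares)+\ZZ[\calO_\pares]=V'_+$; the sum is direct because $\calO_\pares$ has generic rank one whereas every class in $N_\bE(\pares)$ has Mukai-rank zero, so $[\calO_\pares]\notin N_\bE(\pares)$. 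With this in hand, (ii) is immediate: the image of $K(\DDD_0)=K(\DDD^b_\bE(\pares))$ under $K(\DDD_0)\hookrightarrow K(\DDD_+)\to N(\DDD_+)\subseteq N(\pares)$ is precisely $N_\bE(\pares)=V'_0$.

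For the remaining identity $N(\DDD_-)=V'_-$, inclusion $\subseteq$ is direct from the definitions: any $A\in\DDD_-$ is supported on $\bE$, so $[A]\in N_\bE(\pares)=V'_0$, and $\Hom^\bullet(\calO_\pares,A)=0$ forces $\chi(\calO_\pares,A)=0$, placing $[A]$ in $V'_0\cap[\calO_\pares]^\perp=V'_-$. For $\supseteq$ I would lift the $\eta_-$-basis of $V'_-$ to actual objects of $\DDD_-$: each sheaf $\F{j}{i}=\calO_{\E{j}{i}}(-1)$ is supported on $\bE$ and cohomology-free because $\E{j}{i}\cong\PP^1$, while the chosen line bundle $F\in\Pic^{g-1}(E)$ with $H^0(F)=0$ automatically satisfies $H^1(F)=0$ by Riemann--Roch, so $F\in\DDD_-$ as well.

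The one genuinely delicate point, and what justifies the apparently awkward phrasing of (ii), is the meaning of the word \emph{numerical}. If one were to form $N(\DDD_0)$ intrinsically, as $K(\DDD_0)$ modulo the radical of its own restricted Euler pairing, the class $u'=-[k(p)]$ would be killed: on a K3 surface one has $\chi(k(p),G)=\text{rank}(G)$, which vanishes for every $G$ supported on $\bE$. Hence intrinsically $K(\DDD_0)/\mathrm{rad}$ would recover only $V'_-$, not $V'_0$. Routing through $\DDD_+$ rescues $u'$, because $\calO_\pares$ pairs non-trivially with $k(p)$ and so removes $u'$ from the radical once one enlarges the ambient triangulated category. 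This is the main conceptual obstacle; once the interpretation is fixed by placing everything inside $N(\pares)$, the rest of the argument is essentially bookkeeping.
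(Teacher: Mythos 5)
Your proof is correct and takes essentially the same route as the paper's, which disposes of (i) by citing Lemma~\ref{lem:Niso} together with the definitions of $\DDD_\pm$ and $V'_\pm$, and of (ii) by noting that $[\widetilde F]-[F]=[k(p)]$ gives the class of a point in the image of $K(\DDD_0)$. The details you supply — generation of $K(\DDD_+)$ by $K(\DDD_0)$ and $[\calO_\pares]$, directness via the rank, lifting the $\eta_-$-basis to objects of $\DDD_-$, and the observation that $[k(p)]\in\rad(K(\DDD_0))$ so that the intrinsic quotient $K(\DDD_0)/\rad$ cannot recover $V'_0$ — are precisely the points the paper leaves implicit or relegates to the closing remark of its proof.
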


\begin{proof}
(i) is obvious from Lemma \ref{lem:Niso} and the definitions of $V'_-$, $V'_+$ and $\DDD_-$, $\DDD_+$, respectively. For (ii), just note that $F$ and $\widetilde F$ yield the class of a point in the numerical K-group. Also note that the lattices $N(\DDD_0)$ and $V'_0$ are not isomorphic since $[k(p)]\in\rad(K(\DDD_0))$ by Lemma \ref{lem:Niso} for the class of (the skyscraper sheaf of) a point. 
%
%(iii) The category $\Coh_\bE(\pares)$ inherits local Ext sheaves from $\Coh(\pares)$. Hence, the global Ext groups coincide by the standard spectral sequence. Thus the functor is fully faithful on $\Coh_\bE(\pares)$. By an induction on the length of triangles, it is fully faithful altogether. On essential surjectivity: we have to show that any complex $A^\bullet\in\DDD^b(\pares)$, exact off $\bE$, is isomorphic to a complex comprised of sheaves set-theoretically supported on $\bE$. For this, consider the thickened neighbourhoods $\bE^k$ defined by powers of the ideal of $\bE$. By assumption, each cohomology sheaf $h^i(A^\bullet)$ is supported scheme-theoretically on some $\bE^k$. As $A^\bullet$ is bounded, there is a $k\gg0$ such that $A^\bullet\to A^\bullet|_{\bE^k}\in\DDD^b(\Coh_\bE(\pares))$ is a quasi-isomorphism.
\end{proof}

We proceed to define the autoequivalences of $\DDD^b(\pares)$ which lift the Coxeter elements:
\begin{align*}
 \varphi_0 \, &:= \TTT_{F_1^1} \dotsm \TTT_{F_{\alpha_1-1}^1} \dotsm
                  \TTT_{F_1^r} \dotsm \TTT_{F_{\alpha_r-1}^r} \MMM_{\calO_\pares(E)}, \\
 \varphi_+    &:= \TTT_{F_1^1} \dotsm \TTT_{F_{\alpha_1-1}^1} \dotsm
                  \TTT_{F_1^r} \dotsm \TTT_{F_{\alpha_r-1}^r} \MMM_{\calO_\pares(E)} 
                  \TTT_{\calO_\pares}
                = \varphi_0 \TTT_{\calO_\pares} .
\end{align*}

\begin{theorem}
The autoequivalences $\varphi_0$ and $\varphi_+$ restrict to autoequivalences of $\DDD_0$ and $\DDD_+$, respectively, and $\varphi_0^N=\tau_0$ and $\varphi_+^N=\tau_+$.
\end{theorem}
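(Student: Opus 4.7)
The plan splits into two nearly independent tasks: verifying the restriction property and verifying the K-theoretic identity.

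For the restriction statement, I would argue building block by building block. Each spherical twist $\TTT_{F_j^i}$ sits in the defining triangle $\Hom^\bullet(F_j^i,A)\otimes F_j^i\to A\to\TTT_{F_j^i}(A)$, and since $F_j^i=\calO_{\E{j}{i}}(-1)$ is supported on $\bE$, it lies in $\DDD_0\subset\DDD_+$; hence if $A\in\DDD_0$ (resp.\ $A\in\DDD_+$), so is $\TTT_{F_j^i}(A)$. Tensoring with a line bundle preserves support, so $\MMM_{\calO_\pares(E)}$ maps $\DDD_0$ to $\DDD_0$. The only genuinely new point occurs for $\varphi_+$: we must check that $\MMM_{\calO_\pares(E)}(\calO_\pares)=\calO_\pares(E)$ still lies in $\DDD_+$. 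This follows from the adjunction sequence $0\to\calO_\pares\to\calO_\pares(E)\to\calO_E(E\cdot E)\to0$, which exhibits $\calO_\pares(E)$ as an extension of an object of $\DDD_0$ by $\calO_\pares$, hence in $\DDD_+$. Finally, $\TTT_{\calO_\pares}$ preserves $\DDD_+$ by its defining triangle, since $\calO_\pares\in\DDD_+$.

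For the K-theoretic identity I would simply assemble the pieces already established in the paper. Each spherical twist induces $\TTT_{F_j^i}^N=s_{[F_j^i]}$. The line bundle twist was computed to give $\MMM_{\calO_\pares(E)}^N=m_{[F]}=\psi_{u',[F]}$ via the decomposition $[\calO_\pares(E)]=w'+[F]+du'$. Moreover $\TTT_{\calO_\pares}^N=s_{[\calO_\pares]}=s_{w'-u'}$, and since $s_a=s_{-a}$ this equals $s_{u'-w'}$. Passing through the isometries $\eta_0$ and $\eta_+$ of Lemma \ref{lem:Niso}, which send $\E{j}{i}\mapsto F_j^i$, $E\mapsto F$, $u\mapsto u'$ and $w\mapsto w'$, the factors match term by term with the factors defining $\tau_0$ and $\tau_+$. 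Composing yields $\varphi_0^N=\tau_0$ and $\varphi_+^N=\varphi_0^N\TTT_{\calO_\pares}^N=\tau_0 s_{u-w}=\tau_+$.

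The conceptually delicate point—and the only one requiring an honest geometric input rather than formal bookkeeping—is stability of $\DDD_+$ under $\MMM_{\calO_\pares(E)}$, since the line bundle $\calO_\pares(E)$ is neither trivial nor supported on $\bE$. Once the adjunction sequence above is invoked to place $\calO_\pares(E)$ in $\DDD_+$, everything else reduces to triangulated-category formalities and the Eichler–Siegel identification already carried out. I would not expect to need to examine the order of the twists carefully, since the correspondence between generators is term by term and the theorem does not assert anything about braid relations.
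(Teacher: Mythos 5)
Your proposal is correct and follows essentially the same route as the paper: the numerical identity is assembled from $\TTT_G^N=s_{[G]}$, $\MMM_{\calO_\pares(E)}^N=m_{[F]}$ and $\TTT_{\calO_\pares}^N=s_{u'-w'}$ via the isometries of Lemma~\ref{lem:Niso}, while the restriction statement uses that spherical twists preserve any triangulated subcategory containing the spherical object and that line bundle twists respect supports. Your only addition is to make explicit, via the restriction sequence $0\to\calO_\pares\to\calO_\pares(E)\to\calO_\pares(E)|_E\to0$, the fact that $\calO_\pares(E)\in\DDD_+$, which the paper asserts without proof.
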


\begin{proof}
Most of the assertions in the theorem were proven in the preceding discussion. Note that $w-u=[\calO_\pares]$, so $\TTT_{\calO_\pares}=s_{w-u}=s_{u-w}$, as desired. What remains to be shown is $\varphi_0(\DDD_0)=\DDD_0$ and $\varphi_+(\DDD_+)=\DDD_+$.

For an arbitrary line bundle $L\in\text{Pic}(\pares)$, the autoequivalence $\MMM_L$ of $\DDD^b(\pares)$ respects supports. Hence, $\MMM_L$ maps $\DDD_0$ into $\DDD_0$. As $\calO_\pares(E)\in\DDD_+$, the functor $\MMM_{\calO_\pares(E)}$ maps $\DDD_+$ into $\DDD_+$.

Turning to the spherical twist functors, the following fact proves the claim: For a full triangulated subcategory $\mathcal{T}\subset\DDD^b(\pares)$ and a spherical object $G\in\mathcal{T}$, the twist $\TTT_G$ restricts to an autoequivalence of $\mathcal{T}$, as follows at once from the triangles defining $\TTT_G$.
\end{proof}

\begin{remark}
There are many other categories that can be used here. For example, instead of $\DDD_+$ one could as well take the triangulated category generated by the structure sheaves of the surface, the irreducible components of $\bE$ and of a point on $E$. The categories we employ are natural --- they do not depend on additional choices. However, note that the Coxeter functors depend on the order of the spherical twists.
\end{remark}

\begin{remark}
The case $g=0$ has already been treated in \cite{EP}. While making use of the same triangulated categories $\DDD_0$ and $\DDD_+$, different functors were presented as lifts of $\tau_0$ and $\tau_+$: since $g=0$ means that the central curve $E$ is rational, the additional spherical objects $\calO_E$ and $\calO_E(-1)$ can be used to take $\TTT_{\calO_E(-1)} \TTT_{\calO_E}$ in place of $\MMM_{\calO_\pares(E)}$.

We remark that $\DDD_0$ and $\DDD_+$ are generated by spherical objects if and only if $g=0$, and then the autoequivalences $\TTT_{\calO_E(-1)} \TTT_{\calO_E}$ and $\MMM_{\calO_\pares(E)}$ are genuinely different. For example, $\MMM_{\calO_\pares(E)}(\calO_E)$ is a sheaf but $\TTT_{\calO_E(-1)} \TTT_{\calO_E}(\calO_E)$ has nonzero cohomology in two degrees.
\end{remark}

\begin{remark} \label{lem:curve}
There should be a curve picture of the situation, analogous to the one developed in \cite{KST} for certain hypersurface singularities. More precisely, there should be a lift of the Coxeter element to the graded triangulated category of singularities, $\Dsgr(R)$. In the case when $g=0$ and $(X,x)$ is a hypersurface singularity, it is proved in \cite{KST} that $\Dsgr(R)$ is generated by a collection of exceptional objects, turning it into the derived category of a quiver with relations. Nevertheless, if $g>0$, one cannot expect to have a full exceptional collection but only a differential graded algebra as model. The approach to generating series pursued in \cite{L1} is modelled on the curve case, using non-symmetric forms and roots of length 1. 

In \cite{BGP}, the Coxeter element (of a root lattice) is lifted to an endofunctor of the category of representations of the (oriented) quiver. This functor is not invertible, which seems to be related to the fact that the category used is abelian and not triangulated. 
\end{remark}

\subsection{Cohomology instead of K-group} \label{sub:cohomology}

We close by pointing out that the lattices $V_0$ and $V_+$ can also be obtained from the numerical Chow group or from cohomology. In fact, these two invariants seem to be used more often than the numerical K-group, so we briefly explain the differences. Hitherto, we have opted to work with the (numerical) K-groups exclusively because these are truly intrinsic invariants of the triangulated categories.

The Chern character defines a map $\chern\colon K(\pares)\to CH^*(\pares)\otimes\QQ$ and, by the Riemann-Roch theorem, an isomorphism 
$K(\pares)\otimes\QQ\isom CH^*(\pares)\otimes\QQ$, see \cite[Corollary 18.3.2]{Fulton}.
As $\pares$ is a surface with even intersection pairing, the Chern map is already defined without denominators. Next, there is the cycle map $CH^*(\pares)\to H^*(\pares)$ to singular cohomology with integral coefficients: its image is the algebraic part of cohomology.

% It is well-known that $H^*(\pares)$, equipped with the cup product, is isomorphic as a lattice to $-2E_8+3U$. 

As $\pares$ is a smooth, projective surface, $CH^1(\pares)_{\text{num}}$ is isomorphic to the N\'eron-Severi group of $\pares$ and $CH^2(\pares)_{\text{num}}$ is free of rank one, spanned by the class of a point. We find that the Chern map induces an isomorphism 
 $N(\pares)\isom CH^*(\pares)_\text{num}$
which, however, is not an isometry. The cycle map does respect the pairings and yields an isometry
$CH^*(\pares)_{\text{num}}\isom H^*(\pares)_{\text{alg}}$.

Matters can be improved by taking the Mukai vector
$v(\cdot):=\chern(\cdot)\sqrt{\text{td}_\pares}$
instead of the Chern character (where $\text{td}_\pares$ is the Todd class of the surface), and by modifying the pairings on Chow ring and cohomology: invert the sign of the unimodular hyperbolic plane 
spanned by fundamental class and point. Denoting this new pairing by $\langle-,-\rangle$, the Grothendieck-Riemann-Roch theorem gives 
$\chi(A,B)=-\langle v(A),v(B)\rangle$
for all coherent sheaves on $A$ and $B$. (As $\text{td}_\pares=[\pares]-2u$, where $[\pares]$ is the class of the surface and $-u$ is the class of a point, we have 
 $v(A)=\chern(A) - \text{rk}(A)u$ for any $A\in\Coh(\pares)$.)
See \cite[\S10]{Huy} for details.

Consequently, we arrive at a chain of lattice isomorphisms
\[ N(\pares) \xrightarrow{v} CH^*(\pares)_{\text{num}} 
             \to H^*(\pares)_{\text{alg}} .\]
Note that $v(\F{i}{j})=\chern(\F{i}{j})=[\E{i}{j}]$ and $v(F)=\chern(F)=[E]$ 
as cycles in the numerical Chow group or cohomology. The class of a point 
is given by $pt=v(\widetilde F)-v(F)=-u$.

An autoequivalence $\varphi\in\text{Aut}(\DDD^b(\pares))$ induces isomorphisms
$\varphi^{CH}$ and $\varphi^H$ of the Chow ring and cohomology, respectively.
In contrast to $\varphi^K$ and $\varphi^N$, this is not tautological but relies
on Orlov's existence theorem for Fourier-Mukai kernels on smooth, projective
varieties; see \cite[\S5]{Huy} for details. The maps $\varphi^{CH}$ and
$\varphi^H$ are isometries for the Mukai pairings indicated above. One can easily
check that for a spherical sheaf $G$ on $\pares$, $\TTT_G^H=s_{v(G)}$ is the
reflection along its Mukai vector. Given a line bundle $L$, $\MMM_L^H=m_{c_1(L)}$
is the Eichler-Siegel transformation for the first Chern class of $L$; this is 
also multiplication (using the cup product) with the Chern character of $L$.

%The image $\chern(N_\bE(\pares))$ under the Chern isomorphism is 
%generated by the irreducible components of $\bE$ and the class of a 
%point, $pt$. It is easy to see that $pt$ is in the radical of 
%$K_\bE(\pares)$ (\cite[Lemma~4]{EP}). But $pt$ is not in the radical 
%of $K(\pares)$ by virtue of $\chi(\calO_\pares,k(p))=1$ and hence is 
%a nonzero class in $N_\bE(\pares)$.

\bigskip
\noindent Wolfgang Ebeling, Leibniz Universit\"{a}t Hannover, Institut f\"{u}r Algebraische Geometrie,
Postfach 6009, D-30060 Hannover, Germany \\
E-mail: ebeling@math.uni-hannover.de

\bigskip
\noindent David Ploog, Department of Mathematics, University of Toronto,
Toronto, Ontario, Canada M5S 2E4 \\
E-mail: ploog@math.uni-hannover.de

\end{document}